\newcommand{\PP}{\mathbb{P}}
\newtheorem{theorem}{Theorem}[section]
\newtheorem{proposition}[theorem]{Proposition}
\newtheorem{remark}[theorem]{Remark}
\newcommand{\KL}{\operatorname{D_{KL}}}
\newcommand{\R}{\mathbb{R}}
\newcommand{\Prob}{\mathbb{P}}
\newcommand{\cQ}{\mathcal{Q}}
\newcommand{\cP}{\mathcal{P}}
\title{Robust Stochastic Outperformance\\ under Kullback--Leibler Ambiguity}
\author{Ozan Hur\\TUBITAK\\\texttt{ozan.hur@tubitak.gov.tr}\thanks{This work was conducted entirely outside of my employment using only personal resources, it does not represent the views of my employer, and no proprietary or confidential information was used. Any affiliations are for identification only and do not imply endorsement.}
}
\date{July~20,~2025}
\begin{document}
\maketitle

\begin{abstract}
We study the worst-case probability that \(Y\) outperforms a benchmark \(X\) when the law of \(Y\) lies in a Kullback-Leibler neighbourhood of the benchmark. The max--min problem over couplings admits a tractable dual (via optimal transport), whose optimiser is an exponential tilt of the benchmark law. The resulting solution reduces to a one-parameter family indexed by regulizer $\lambda$, which controls the KL information budget and induces an increasing transfer of probability mass from lower to higher outcomes. The formulation can be evaluated from the baseline distribution and may serve as a distribution-wide scenario generation environment as well as a basis for robust performance assessment.

\smallskip

\noindent\textbf{Keywords:} optimal transport; exponential tilting; stochastic dominance; stress testing.
\end{abstract}

\section{Problem statement}\label{sec:problem}
Our goal is to quantify \emph{robust stochastic outperformance} when the
reference law of one random variable is known exactly while the competing
quantity is only known up to a Kullback–Leibler (KL) divergence
constraint.

\paragraph{Stochastic outperformance.} 
Given two real-valued random variables \(X\) and \(Y\) defined on the same
probability space, let $\mathbb{P}$ be their joint law over $\mathbb{R}^2$. We say that \(Y\) \emph{outperforms} \(X\) if
\[
\mathbb{P}\,\!\bigl(Y>X\bigr)>\tfrac12.
\tag{1}
\]
The threshold \(1/2\) marks the point at which the event \(\{Y>X\}\) becomes
more likely than its complement and has been adopted in earlier work on
\textit{probability dominance} \cite{WratherYu1982}.  

\paragraph{Model setup.}
Let \((\Omega,\mathscr{F})\) be a measurable space and fix a benchmark
random variable \(X:\Omega\to\R\) whose law \(P\in\cP(\R)\) is fully
specified, with cumulative distribution function (c.d.f.) \(F\).
The competing random variable \(Y\) may follow \emph{any} distribution
\(Q\) within a KL ball of radius \(\varepsilon>0\) around \(P\),
\begin{equation}\label{eq:KLball}
  \cQ_{\varepsilon}
  :=\bigl\{Q\ll P : \KL(Q\Vert P)\le \varepsilon\bigr\},
  \qquad
  \KL(Q\Vert P)=\int \log\frac{dQ}{dP}\,dQ.
\end{equation}
The KL ball captures statistical ambiguity such that larger \(\varepsilon\) allow
greater deviations from the benchmark law, while \(\varepsilon=0\) forces
\(Q=P\).

\paragraph{Robust outperformance criterion.} Because we seek a \emph{robust} guarantee of outperformance, we compute the
\emph{worst‑case} probability of $Y$ exceeding $X$ over \emph{all} couplings
that share the given marginals.  For $P,Q\in\cP(\R)$ let
\[
  \Pi(P,Q):=\Bigl\{\gamma\in\cP(\R^{2}):\gamma\circ X^{-1}=P,\;
                                   \gamma\circ Y^{-1}=Q\Bigr\},\qquad
  \Prob_{\gamma}(Y>X)=\int_{\R^{2}} I_{\{y>x\}}\,d\gamma(x,y).
\]
Our performance metric is the max–min value
\begin{equation}\label{eq:robust-problem}
  V_{\varepsilon}
  :=\max_{Q\in\cQ_{\varepsilon}}
    \;\min_{\gamma\in\Pi(P,Q)}
    \Prob_{\gamma}(Y>X)>1/2,
\end{equation}
where $\cQ_{\varepsilon}$ is the KL ball \eqref{eq:KLball}.  The inner
minimisation is an optimal‑transport problem with cost
$c(x,y)=I_{\{y>x\}}$.  A distribution $Q^{\star}$ attaining the outer
maximum is the \emph{least‑favourable} law for $Y$ within the KL ball,
while a coupling $\gamma^{\star}$ attaining the inner minimum realises the
most adversarial dependence structure between $X$ and $Y$.  In this way
$V_{\varepsilon}$ provides a lower bound on the probability of
outperformance that is immune to both distributional ambiguity and
dependence uncertanity.

\paragraph{Interpretation of robustness.}
Expression \eqref{eq:robust-problem} answers the question:
\emph{“What is the largest worst-case probability with which \(Y\) can be guaranteed
to outperform \(X\) when nature may choose any distribution within
\(\cQ_{\varepsilon}\)?”}  The outer maximiser
selects the marginal of \(Y\) that  that most favours outperformance, while the inner minimiser selects the scenario that make
\(Y\le X\) as likely as possible. 

\paragraph{Stochastic dominance perspective.}
The least-favourable choice of the distribution for $Y$ that achieves our worst-case guarantee can be read as an \emph{order improvement} relative to the benchmark law of $X$. In particular, the worst-case law puts \emph{more probability on higher outcomes} than the benchmark, i.e., its CDF lies everywhere \emph{below} the benchmark which is exactly first-order stochastic dominance (FSD). This reflects the explicit exponential-tilt solution, which moves probability mass upward an “increasing transfers’’ view of FSD \cite{Mueller2013Transfers}.
 Thus, the robust outperformance is not only a worst-case probability bound but also identifies a marginal that is “better in the FSD sense” than the baseline law, providing an order-based, distribution-wide perturbation.

\section{Explicit solution by exponential tilting}\label{sec:solution}

In line with \cite[Sec.~2]{Kamihigashi2020Partial}, we provide the next proposition.
\begin{proposition}[Dual formula]\label{prop:dual}
Let \(P,Q\in\mathcal P(\mathbb R)\) have cumulative distribution functions \(F\) and \(G\).
Then the optimal transport cost
\[
   T(P,Q)
   \;:=\;
   \min_{\gamma\in\Pi(P,Q)}
   \int_{\mathbb R^2}\!
        I_{\{y>x\}}\,
   \mathrm d\gamma(x,y)
\]
admits the one–dimensional representation
\[
   T(P,Q)=\max_{a\in\mathbb R}\bigl[F(a)-G(a)\bigr].
\]
\end{proposition}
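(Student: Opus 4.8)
The plan is to prove the two inequalities $T(P,Q)\le \sup_a[F(a)-G(a)]$ and $T(P,Q)\ge \sup_a[F(a)-G(a)]$ separately, the first by exhibiting an admissible coupling (or by a Kantorovich-duality argument with an explicit dual pair), and the second by a lower bound valid for \emph{every} coupling.

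\medskip

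\textbf{Lower bound (the easy direction).} Fix any $a\in\mathbb R$ and any $\gamma\in\Pi(P,Q)$. Observe the pointwise inequality $I_{\{y>x\}}\ge I_{\{x< a\le y\}}=I_{\{x<a\}}I_{\{y\ge a\}}$... but that product form is awkward; instead I would use $I_{\{y>x\}}\ge I_{\{x<a\}}-I_{\{y<a\}}$, which holds because if $x<a$ and $y\ge a$ then the left side is $1$ and the right side is $1$, while in all other cases the right side is $\le 0$ or the left side already dominates. Integrating against $\gamma$ and using the marginal constraints gives $\Prob_\gamma(Y>X)\ge F(a^-)-G(a^-)$ (using left limits, or $F(a)-G(a)$ after a limiting argument), and taking the supremum over $a$ and then the infimum over $\gamma$ yields $T(P,Q)\ge \sup_a[F(a)-G(a)]$.

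\medskip

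\textbf{Upper bound (the harder direction).} Here I would construct an explicit coupling achieving the value, which in one dimension is natural via the quantile (monotone) coupling or a variant thereof. Let $a^\star$ approximately attain the supremum. The idea is to transport mass so that $Y>X$ happens as rarely as possible: pair the lowest $X$-values with the highest $Y$-values so that the ``crossing'' region is minimized. Concretely, using generalized inverses $F^{-1},G^{-1}$ and a uniform random variable $U$, one candidate is the \emph{antimonotone} coupling $(X,Y)=(F^{-1}(U),G^{-1}(1-U))$, or more precisely a coupling engineered so that $\{Y>X\}$ coincides (up to null sets) with an event of probability exactly $\max_a[F(a)-G(a)]$. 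I expect the cleanest route is actually the co-monotone coupling combined with a careful bookkeeping of where $F^{-1}(u)<G^{-1}(u)$: under $(X,Y)=(F^{-1}(U),G^{-1}(U))$ we have $Y>X$ iff $G^{-1}(U)>F^{-1}(U)$, and the Lebesgue measure of that $u$-set equals $\sup_a[F(a)-G(a)]$ when... hmm, this requires $F$ and $G$ to be ordered or to cross once, which need not hold. So the honest approach is Kantorovich--Rubinstein / LP duality: exhibit a dual-feasible pair $(\varphi,\psi)$ with $\varphi(x)+\psi(y)\le I_{\{y>x\}}$ and $\int\varphi\,dP+\int\psi\,dQ = \sup_a[F(a)-G(a)]$, taking $\varphi=I_{\{\cdot<a^\star\}}$ and $\psi=-I_{\{\cdot<a^\star\}}$; feasibility is exactly the pointwise inequality from the lower-bound step, and weak duality then gives $T(P,Q)\le \sup_a[F(a)-G(a)]$ only if duality holds with equality — so I must either invoke strong LP/Kantorovich duality for this (bounded, lower-semicontinuous) cost, or, to be self-contained, produce the optimal primal coupling explicitly.

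\medskip

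\textbf{Main obstacle.} The crux is the upper bound: matching weak duality with an actual coupling, or equivalently justifying strong duality for the indicator cost $c(x,y)=I_{\{y>x\}}$, which is lower semicontinuous but not continuous. I would handle this either by citing the general Kantorovich duality theorem for lower-semicontinuous costs, or, following the one-dimensional spirit suggested by the reference to \cite{Kamihigashi2020Partial}, by an explicit partial-transport construction: split $P$ and $Q$ at the level $a^\star$, match the common mass below and above $a^\star$ monotonically (contributing $0$ to the cost), and route the residual mass of size $F(a^\star)-G(a^\star)$ from $\{X<a^\star\}$ upward, which is the unavoidable contribution. Verifying that this construction is a valid element of $\Pi(P,Q)$ and that no cheaper coupling exists (via the dual certificate above) is the technical heart of the argument; everything else is bookkeeping with c.d.f.s and left/right limits, plus a routine limiting argument to pass from strict to non-strict inequalities and to handle atoms.
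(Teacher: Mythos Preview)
Your plan coincides with the paper's proof on the essential point. The paper takes precisely the route you list as your first option for the upper bound: it notes that $c(x,y)=I_{\{y>x\}}$ is bounded and lower--semicontinuous, invokes Villani's Theorems~4.10 and~5.10(iii) for existence of a primal minimiser and attainment in the Kantorovich dual, computes the $c$-transform of $\psi_a(y)=I_{\{y\le a\}}$ to be $\psi_a^{c}(x)=I_{\{x\le a\}}$, and reads off the dual value $F(a)-G(a)$. Your lower-bound inequality $I_{\{y>x\}}\ge I_{\{x\le a\}}-I_{\{y\le a\}}$ is exactly this dual pair spelled out pointwise; the paper does not attempt any explicit primal construction.

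Your detours through explicit couplings are extra relative to the paper, and here there is a genuine gap in the sketch. Neither the comonotone coupling nor the ``split at a single $a^\star$'' construction is optimal when $F-G$ changes sign more than once (try $P$ uniform on $\{0,2,4\}$, $Q$ uniform on $\{1,3,5\}$: comonotone gives $\Prob(Y>X)=1$, whereas $\sup_a[F(a)-G(a)]=1/3$). A coupling that \emph{does} achieve $\delta:=\sup_a[F(a)-G(a)]$ is the shifted-quantile coupling $X=F^{-1}(U)$, $Y=G^{-1}\bigl((U-\delta)\bmod 1\bigr)$: for $U>\delta$ one has $V:=U-\delta$ and, since $F(x)\ge V+\delta$ implies $G(x)\ge F(x)-\delta\ge V$, it follows that $G^{-1}(V)\le F^{-1}(V+\delta)$, i.e.\ $Y\le X$; hence $\Prob(Y>X)\le\delta$. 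Exhibiting such a primal witness also cleanly closes the one step that both your outline and the paper's terse argument leave implicit --- strong duality equates the primal value with the supremum over \emph{all} admissible dual pairs, and one must still see why that supremum is realised within the one-parameter indicator family (equivalently, produce a coupling matching the lower bound).
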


\begin{proof}
The cost function \(c(x,y)= I_{\{y>x\}}\) is bounded and lower–semicontinuous on the Polish space \(\mathbb R\times\mathbb R\). Hence, by \cite[Thm.~4.10]{Villani2009}, an optimal coupling \(\gamma^\star\in\Pi(P,Q)\) exists that attains the infimum in the primal problem. Choose the dual potential \(\psi(y)=I_{\{y\le a\}}\). Because
\(c(x,y)\le\zeta(x)+\phi(y)\) with the integrable bounds \(\zeta\equiv1\) and \(\phi\equiv0\), the conditions of \cite[Thm.~5.10\,(iii)]{Villani2009} are satisfied. Consequently, the Kantorovich dual supremum is attained, and we may replace \(\sup\) by \(\max\). To compute the \(c\)-transform of \(\psi\), observe that
\[
  \psi^{c}(x)
  :=\inf_{y\in\mathbb R}\bigl\{\psi(y)+c(x,y)\bigr\}
  =\inf_{y\in\mathbb R}
    \bigl(I_{\{y\le a\}}+I_{\{y>x\}}\bigr)
  =I_{\{x\le a\}},
\]
where the last equality follows by distinguishing the cases \(x\le a\) and \(x>a\).
Substituting the pair \((\psi,\psi^{c})\) into the dual formulation yields the claimed one–dimensional representation.
\end{proof}

Based on the previous propositon, the optimisation problem \eqref{eq:robust-problem} can be written as
\[
   V_{\varepsilon}
   =\max_{Q\in\mathcal Q_{\varepsilon}}
     \max_{a\in\mathbb R}\,[F(a)-G(a)].
\]
Moreover, the order of maximisation may be interchanged because both variables are
optimised over a joint product domain,
\begin{equation}\label{eq:dual}
   V_{\varepsilon}
   =\max_{a\in\mathbb R}
     \max_{Q\in\mathcal Q_{\varepsilon}}
     [F(a)-G(a)].
\end{equation}

\begin{proposition}[Exponential-tilt solution of the inner problem]%
\label{prop:tilt}
Fix \(a\in\mathbb R\) and consider the programme
\begin{equation}\label{eq:inner}
   \min_{Q\ll P}\bigl\{Q((-\infty,a]):
           D_{\mathrm{KL}}(Q\Vert P)\le\varepsilon\bigr\}.
\end{equation}
\begin{enumerate}
\item[(i)]
      There exists a unique solution and it is an \textbf{exponentially tilted}
      measure
      \begin{equation}\label{eq:Qtilt}
         \frac{\mathrm dQ_{\lambda,a}}{\mathrm dP}(x)=
         \frac{\exp\!\bigl[-I_{\{x\le a\}}/\lambda\bigr]}
              {Z(\lambda,a)},
         \qquad
         Z(\lambda,a)=1-\bigl(1-e^{-1/\lambda}\bigr)F(a),
      \end{equation}
      where \(\lambda>0\) is the Lagrange multiplier.
\item[(ii)] 
      Under \(Q_{\lambda,a}\),
      \begin{equation}\label{eq:G-lambda}
         G_{\lambda,a}(a)=
         \frac{e^{-1/\lambda}F(a)}{Z(\lambda,a)},
         \qquad
         D_{\mathrm{KL}}\!\bigl(Q_{\lambda,a}\Vert P\bigr)=
         -\log Z(\lambda,a)-\frac{1}{\lambda}\,G_{\lambda,a}(a).
      \end{equation}
\item[(iii)] 
      For each fixed \(a\) the map
      \(\lambda\mapsto D_{\mathrm{KL}}(Q_{\lambda,a}\Vert P)\)
      is continuous and strictly decreasing on \((0,\infty)\).
      Hence, for every \(\varepsilon>0\) there is a unique
      \(\lambda(a)\) such that
      \(D_{\mathrm{KL}}(Q_{\lambda(a),a}\Vert P)=\varepsilon\).
\end{enumerate}
\end{proposition}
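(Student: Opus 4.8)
The plan is to treat \eqref{eq:inner} as a convex programme in $Q$ and solve it via Lagrangian duality, extracting the exponential-tilt form from the first-order (Gibbs) condition. First I would set up the Lagrangian $\mathcal L(Q,\lambda)=Q((-\infty,a])+\lambda\bigl(D_{\mathrm{KL}}(Q\Vert P)-\varepsilon\bigr)$ for $\lambda\ge 0$ and minimise over $Q\ll P$ with total mass one. Writing $q=dQ/dP$, the objective becomes $\int\bigl(I_{\{x\le a\}}q+\lambda q\log q\bigr)\,dP-\lambda\varepsilon$ subject to $\int q\,dP=1$; adjoining a multiplier $\mu$ for the mass constraint and differentiating pointwise in $q$ gives $I_{\{x\le a\}}+\lambda(\log q+1)+\mu=0$, i.e. $q(x)\propto\exp[-I_{\{x\le a\}}/\lambda]$. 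Normalising against $P$ produces $Z(\lambda,a)=\int e^{-I_{\{x\le a\}}/\lambda}\,dP = e^{-1/\lambda}F(a)+\bigl(1-F(a)\bigr)=1-(1-e^{-1/\lambda})F(a)$, which is \eqref{eq:Qtilt}. Strict convexity of $q\mapsto q\log q$ gives uniqueness of the minimiser for each $\lambda$, and since the constraint set is convex and the KL constraint will be active whenever $\varepsilon$ is small enough that $P$ itself is infeasible (and when it is not active the problem is degenerate), standard convex-duality arguments close part~(i).

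For part~(ii) I would simply substitute $q_{\lambda,a}=e^{-I_{\{x\le a\}}/\lambda}/Z(\lambda,a)$ into the relevant integrals. The mass that $Q_{\lambda,a}$ assigns to $(-\infty,a]$ is $\int_{(-\infty,a]}q_{\lambda,a}\,dP = e^{-1/\lambda}F(a)/Z(\lambda,a)$, giving $G_{\lambda,a}(a)$. For the divergence, $\log q_{\lambda,a}(x)=-I_{\{x\le a\}}/\lambda-\log Z(\lambda,a)$, so $D_{\mathrm{KL}}(Q_{\lambda,a}\Vert P)=\int\log q_{\lambda,a}\,dQ_{\lambda,a}=-\tfrac1\lambda Q_{\lambda,a}((-\infty,a])-\log Z(\lambda,a)$, which is exactly \eqref{eq:G-lambda}. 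This part is pure bookkeeping.

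Part~(iii) is where the real work lies. Writing $t:=e^{-1/\lambda}\in(0,1)$, which is a strictly increasing continuous bijection of $\lambda\in(0,\infty)$ onto $(0,1)$, I would express $D(\lambda):=D_{\mathrm{KL}}(Q_{\lambda,a}\Vert P)$ purely in terms of $t$ and $p:=F(a)$: from \eqref{eq:G-lambda}, $Z=1-(1-t)p$ and $G=tp/Z$, and $-1/\lambda=\log t$, so $D = -\log\bigl(1-(1-t)p\bigr)+(\log t)\cdot\frac{tp}{1-(1-t)p}$. The claim is that this is continuous in $t$ on $(0,1)$ (clear, since $1-(1-t)p\ge 1-p>0$ for $p<1$, and the $p=1$ case is handled separately or by the same formula with $Z=t$, $G=1$, $D=-\log t=1/\lambda$), with $D\to 0$ as $t\to 1^-$ (i.e. $\lambda\to\infty$) and $D\to$ its supremum as $t\to 0^+$. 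The monotonicity — $D$ strictly decreasing in $\lambda$, equivalently strictly increasing in $1/\lambda$, equivalently strictly decreasing in $t$ — is the main obstacle: I would prove it by differentiating $D$ with respect to $t$ and showing $D'(t)<0$ on $(0,1)$ for every fixed $p\in(0,1)$. A cleaner route, which I expect to be more robust, is to use the general fact that along an exponential family $dQ_\lambda/dP\propto e^{-c/\lambda}$ with $c=I_{\{x\le a\}}$, the map $\lambda\mapsto D_{\mathrm{KL}}(Q_\lambda\Vert P)$ is monotone because $\beta:=1/\lambda\mapsto D_{\mathrm{KL}}(Q_{1/\beta}\Vert P)$ has derivative $\beta\,\mathrm{Var}_{Q_{1/\beta}}(c)\ge 0$, with strict inequality since $c$ is non-constant under $P$ (as $0<F(a)<1$); continuity is then immediate from smoothness of $\beta\mapsto\log Z$. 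Finally, combining strict monotonicity, continuity, and the limiting values $D\to 0^+$ as $\lambda\to\infty$ and $D\to D_{\max}>\varepsilon$ (for $\varepsilon$ in the admissible range) as $\lambda\to 0^+$, the intermediate value theorem yields the unique $\lambda(a)$ with $D_{\mathrm{KL}}(Q_{\lambda(a),a}\Vert P)=\varepsilon$, completing~(iii).
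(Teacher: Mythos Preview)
Your approach is correct and closely tracks the paper's own proof: both set up the same Lagrangian, obtain the Gibbs/exponential-tilt density for (i), and verify (ii) by direct substitution. The paper outsources existence/form in (i) to a reference (Hu--Hong), while you derive it from the pointwise first-order condition and strict convexity of $q\log q$; these are the same argument at different levels of detail.

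The one place where you go beyond the paper is (iii). The paper simply asserts that differentiating the explicit expression in \eqref{eq:G-lambda} with respect to $\lambda$ gives a negative derivative. Your exponential-family route---reparametrising by $\beta=1/\lambda$ and computing $\tfrac{d}{d\beta}D_{\mathrm{KL}}(Q_{1/\beta}\Vert P)=\beta\,\mathrm{Var}_{Q_{1/\beta}}(I_{\{x\le a\}})>0$ whenever $0<F(a)<1$---is a cleaner and more conceptual proof of strict monotonicity, and it avoids the somewhat messy algebra of differentiating the closed form. You also correctly flag a point the paper glosses over: as $\lambda\downarrow 0$ the divergence tends to $-\log\bigl(1-F(a)\bigr)$, so the claim ``for every $\varepsilon>0$ there is a unique $\lambda(a)$'' holds only for $\varepsilon$ below this finite ceiling (equivalently $F(a)>1-e^{-\varepsilon}$); your qualifier ``for $\varepsilon$ in the admissible range'' is the right caveat.
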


\begin{proof}
Introduce the Lagrangian
\[
   L(Q;\lambda):=
   Q((-\infty,a])+\lambda\bigl(D_{\mathrm{KL}}(Q\Vert P)-\varepsilon\bigr),
   \qquad \lambda\ge0.
\]
Because the indicator loss \(I_{\{x\le a\}}\) is bounded, it satisfies
the light-tail Assumption 1 of \cite{HuHong2012}.  
Consequently, \cite[Thm.~2]{HuHong2012} guarantees existence of a minimiser,
and \cite[Prop.~1]{HuHong2012} shows that the minimiser must be of the
exponential-tilt form \eqref{eq:Qtilt}, proving \textit{(i)}.
Substituting \eqref{eq:Qtilt} into the definitions of \(G\) and
\(D_{\mathrm{KL}}\) yields \eqref{eq:G-lambda}, giving \textit{(ii)}.
Finally, differentiating the right-hand side of \eqref{eq:G-lambda} with
respect to \(\lambda\) shows the derivative is negative, establishing the
monotonicity in \textit{(iii)}.
\end{proof}

\begin{remark}[Reduction to a single-variable maximisation]
For each \(a\) choose the unique \(\lambda(a)\) from
Proposition~\ref{prop:tilt}\,\textit{(iii)} and set
\(Q^{a}:=Q_{\lambda(a),a}\).  Then \eqref{eq:dual} reduces to maximising
\[
   \phi_{\lambda(a)}(a)
   :=F(a)-G_{\lambda(a),a}(a)
\]
over \(a\in\mathbb R\).
\end{remark}

\begin{remark}[Computational motivation for the fixed-\texorpdfstring{$\lambda$}{lambda} dual]\label{rem:fixed-lambda}
In the radius‐constrained formulation, each candidate threshold \(a\) requires
solving an inner problem to find the unique multiplier \(\lambda(a)\) satisfying
\(\KL\!\bigl(Q_{\lambda,a}\Vert P\bigr)=\varepsilon\).
Although each inversion is inexpensive, repeating it over a fine grid of
\(a\)-values renders the outer maximisation computationally burdensome.\\

A widely used remedy, see \cite{BreuerCsiszar2016,GlassermanXu2014} is to fix a
single multiplier \(\lambda>0\) \emph{once} and dispense with the inner loop.
The resulting dual reduces to the unconstrained maximisation of
\[
   \phi_\lambda(a)
   \;:=\;
   F(a)-G_{\lambda,a}(a)
   \;=\;
   F(a)\,
   \frac{(1-F(a))(1-e^{-1/\lambda})}
        {1-F(a)+e^{-1/\lambda}F(a)},
   \qquad a\in\mathbb R.
\]
\end{remark}

\begin{proposition}[Existence (and uniqueness) of a maximiser of $\phi_\lambda$]%
\label{prop:a-star}
Let \(F\) be the cumulative distribution function of \(P\) and fix \(\lambda>0\).

\begin{enumerate}
\item[(i)] There exists at least one maximiser \(a^\star\in\mathbb R\)
      of \(\phi_\lambda\).
\item[(ii)] If \(F\) is strictly increasing, then the maximiser is unique.
\end{enumerate}
\end{proposition}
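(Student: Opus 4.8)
\emph{Proof plan.} The plan is to eliminate $a$ by the substitution $u=F(a)$, which turns both claims into elementary properties of the profile function appearing in Remark~\ref{rem:fixed-lambda}. Writing $\beta:=e^{-1/\lambda}\in(0,1)$ (legitimate since $\lambda>0$), we have $\phi_\lambda(a)=g\bigl(F(a)\bigr)$ with
\[
   g(u)=\frac{(1-\beta)\,u(1-u)}{1-(1-\beta)u},\qquad u\in[0,1],
\]
so I would first record the basic features of $g$: its denominator stays $\ge\beta>0$ on $[0,1]$, so $g$ is smooth there; $g(0)=g(1)=0$; $g>0$ on $(0,1)$; and $g$ is \emph{strictly unimodal}. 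The last point follows from a short computation: $\operatorname{sign}g'(u)=\operatorname{sign}\bigl((1-\beta)u^{2}-2u+1\bigr)$, and this quadratic (discriminant $4\beta>0$) has exactly one root in $[0,1]$, namely $u^{\star}=\dfrac{1-\sqrt\beta}{1-\beta}=\dfrac1{1+\sqrt\beta}\in(\tfrac12,1)$, so $g$ increases strictly on $[0,u^{\star}]$ and decreases strictly on $[u^{\star},1]$. By compactness of $[0,1]$, $g$ attains $g(u^{\star})=\max_{[0,1]}g$, and only at $u^{\star}$.

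For part (i) I would then produce a threshold $a^{\star}$ with $F(a^{\star})=u^{\star}$. Since $F$ is a c.d.f., $F(-\infty)=0<u^{\star}<1=F(+\infty)$; when $F$ is continuous the intermediate value theorem supplies such an $a^{\star}$, and then $\phi_\lambda(a^{\star})=g(u^{\star})\ge g(F(a))=\phi_\lambda(a)$ for every $a$, so $a^{\star}$ maximises $\phi_\lambda$. For a general nondecreasing right-continuous $F$ I would instead set $a^{\star}:=\inf\{a\in\R:F(a)\ge u^{\star}\}$; right-continuity gives $F(a^{\star})\ge u^{\star}$ and $F(a^{\star}-)\le u^{\star}$, and unimodality of $g$ then shows $\phi_\lambda\le g(u^{\star})$ everywhere, the bound being approached as $a\uparrow a^{\star}$.

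For part (ii), I add the hypothesis that $F$ is strictly increasing, hence injective. Then $F(a)=u^{\star}$ has at most one solution $a^{\star}$, and for every $a\ne a^{\star}$ we have $F(a)\ne u^{\star}$, so strict unimodality of $g$ forces $\phi_\lambda(a)=g(F(a))<g(u^{\star})=\phi_\lambda(a^{\star})$; the maximiser is therefore unique.

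The delicate point — the one I expect to cost the most effort — is the attainment of the supremum in (i) when $F$ has a jump straddling $u^{\star}$: there $F(a^{\star}-)<u^{\star}<F(a^{\star})$, the supremum of $\phi_\lambda$ over $\{a<a^{\star}\}$ equals $g\bigl(F(a^{\star}-)\bigr)$ but is only approached, and one must compare it with the attained value $g\bigl(F(a^{\star})\bigr)$. The robust way to close this gap, which I would adopt, is to invoke continuity of $F$ (equivalently, that $u^{\star}$ lies in the range of $F$) — the natural setting here — under which both parts go through verbatim.
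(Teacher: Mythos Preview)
Your route is the paper's: substitute $u=F(a)$ and reduce to the strictly unimodal profile $h(u)=\dfrac{Cu(1-u)}{1-Cu}$ with $C=1-e^{-1/\lambda}$, whose unique interior maximiser $u^\star=(1+e^{-1/(2\lambda)})^{-1}$ you recover (correctly) via the quadratic $Cu^{2}-2u+1$.

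The difference is one of care, and it cuts in your favour. The paper dispatches (i) by asserting that ``the range of any CDF $F$ is compact'' and then invoking continuity of $h$ on that range; the assertion is false in general (a CDF that is strictly increasing on $(-\infty,a_0)$ with a jump at $a_0$ omits the left-limit value $F(a_0^-)$ from its range). Your worry about a jump straddling $u^\star$ is therefore not cosmetic but a genuine obstruction: if $F$ is continuous and strictly increasing on $(-\infty,a_0)$ with $F(a_0^-)$ just below $u^\star$ and $F(a_0)$ far enough above it that $g\bigl(F(a_0^-)\bigr)>g\bigl(F(a_0)\bigr)$, then $\sup_a\phi_\lambda(a)=g\bigl(F(a_0^-)\bigr)$ is approached as $a\uparrow a_0$ but never attained, and (i) fails as stated. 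Your proposed remedy---assume $F$ continuous, or at least that $u^\star\in\mathrm{Range}(F)$ (the two are not equivalent, but either suffices)---is exactly the missing hypothesis. The same caveat recurs in (ii): the paper deduces $\mathrm{Range}(F)=[0,1]$ from ``$F$ strictly increasing'', which again needs continuity; under that extra assumption your uniqueness argument (injectivity of $F$, unique preimage of $u^\star$, strict unimodality of $g$) is clean and complete.
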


\begin{proof}
Set \(C:=1-e^{-1/\lambda}\in(0,1)\) and put \(x:=F(a)\in\mathrm{Range}(F)\subseteq[0,1]\).
Then \(\phi_\lambda(a)=h(x)\) with
\[
   h(x):=\frac{C\,x(1-x)}{1-Cx}, 
   \qquad 0\le x\le 1.
\]
A direct derivative calculation shows that
\(h'(x)=0\) iff \(x=x^\star:=(1+e^{-1/(2\lambda)})^{-1}\in(1/2,1)\),
and \(h''(x^\star)<0\); hence \(h\) attains its global maximum at \(x^\star\).

\begin{enumerate}

\item \textit{Existence.}
The range of any CDF \(F\) is compact—either the full interval \([0,1]\) in the
continuous case, or a finite/denumerable compact subset in the discrete case.
Because \(h\) is continuous on \([0,1]\), the supremum of
\(h(x)\) over \(\mathrm{Range}(F)\) is attained.
If \(x^\star\in\mathrm{Range}(F)\), take \(a^\star:=F^{-1}(x^\star)\);
otherwise choose any \(a^\star\) with \(F(a^\star)\) achieving
\(\sup_{x\in\mathrm{Range}(F)}h(x)\).

\item \textit{Uniqueness.}
When \(F\) is strictly increasing, its range is the entire \([0,1]\), so
\(x^\star\) is attained uniquely at \(a^\star=F^{-1}(x^\star)\).
\end{enumerate}
\end{proof}

\paragraph{Selection of multiplier \texorpdfstring{$\lambda$}{lambda}.}
The stochastic-outperformance objective~\eqref{eq:robust-problem} is of practical interest only when the maximum value of the objective $\phi(a)$ can exceed one half, indicating a meaningful separation. This requires a careful selection of the fixed multiplier $\lambda$. Hence, we focus on values of $\lambda$ that ensure the problem is non-trivial, namely
\[
  \lambda
  \;>\;
  \lambda_{\mathrm{crit}}
  :=\inf\Bigl\{\lambda>0:
       \max_{a\in\R}\phi(a)>\tfrac12\Bigr\},
\]
so that for the corresponding maximiser $a^{\ast}$ we have
$\phi(a^{\ast})>\tfrac12$.
If $\lambda\le\lambda_{\mathrm{crit}}$, the regularisation imposed by the multiplier is too strong, and no threshold $a$ can achieve the required level of outperformance. In practice, an admissible value $\lambda > \lambda_{\mathrm{crit}}$ can be found efficiently by testing several candidate values.

\begin{proposition}[First-order dominance of the $\lambda$–tilted optimiser]
	\label{prop:FSD_lambda}
	Fix a reference measure $P$ on $\R$ with cumulative distribution function~$F$
	and fix a tilting parameter $\lambda>0$.  Let $Q^*_{\lambda,a^*}$ be the maximizer of the problem \eqref{eq:robust-problem}. Then, $G^*_{\lambda,a^*}$ dominates $F$ in first order sense that is 
	\begin{equation}\label{eq:FSD_Qstar}
		F(x)\;\ge\;G_{\lambda,a^*}^*(x)\quad\text{for all }x\in\R .
	\end{equation}
\end{proposition}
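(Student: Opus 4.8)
The plan is to prove the inequality directly from the closed form of $Q_{\lambda,a}$ supplied by Proposition~\ref{prop:tilt}, and to note that it in fact holds for \emph{every} threshold $a\in\mathbb R$, so in particular for the maximiser $a^{*}$. The starting observation is that, since the indicator $I_{\{x\le a\}}$ in \eqref{eq:Qtilt} takes only the values $0$ and $1$, the Radon--Nikodym derivative $\mathrm dQ_{\lambda,a}/\mathrm dP$ is piecewise constant: it equals $e^{-1/\lambda}/Z(\lambda,a)$ on $(-\infty,a]$ and $1/Z(\lambda,a)$ on $(a,\infty)$. Because $0<e^{-1/\lambda}\le 1$ and $Z(\lambda,a)=1-(1-e^{-1/\lambda})F(a)\in[e^{-1/\lambda},1]\subseteq(0,1]$, the first factor is $\le 1$ and the second is $\ge 1$: the tilt deflates mass below $a$ and inflates it above $a$, which is exactly the ``increasing transfer'' picture of FSD alluded to earlier.

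First I would handle the region $x\le a$. Integrating the density ratio over $(-\infty,x]$ gives $G_{\lambda,a}(x)=\bigl(e^{-1/\lambda}/Z(\lambda,a)\bigr)F(x)$, and since $e^{-1/\lambda}\le Z(\lambda,a)$ — equivalently $e^{-1/\lambda}(1-F(a))\le 1-F(a)$ — the bound $G_{\lambda,a}(x)\le F(x)$ is immediate. Next, for $x>a$ I would split the integral at $a$ to get
\[
   G_{\lambda,a}(x)=\frac{e^{-1/\lambda}}{Z(\lambda,a)}\,F(a)+\frac{1}{Z(\lambda,a)}\bigl(F(x)-F(a)\bigr).
\]
Multiplying the target inequality $G_{\lambda,a}(x)\le F(x)$ through by $Z(\lambda,a)>0$ and substituting $Z(\lambda,a)=1-(1-e^{-1/\lambda})F(a)$, every term cancels except the requirement $(1-e^{-1/\lambda})F(a)\,F(x)\le (1-e^{-1/\lambda})F(a)$, which follows at once from $F(x)\le 1$. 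Hence $G_{\lambda,a}(x)\le F(x)$ for all $x\in\mathbb R$ and all $a$.

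Finally I would combine the two cases to obtain \eqref{eq:FSD_Qstar} for an arbitrary threshold and then specialise to the maximiser $a=a^{*}$ furnished by Proposition~\ref{prop:a-star}, which yields $F(x)\ge G_{\lambda,a^{*}}^{*}(x)$ as stated. There is no genuine obstacle here; the only points deserving a line of comment are the degenerate situations $F(a)=0$ (or, formally, $\lambda\to\infty$), where $Z(\lambda,a)=1$ and $Q_{\lambda,a}=P$ so the inequality holds with equality, and the single point $x=a$, where left/right continuity of $F$ and $G$ makes the two regimes agree. A strict version would additionally need $0<F(a^{*})<1$ and $\lambda<\infty$, but the weak inequality claimed in the proposition requires none of this.
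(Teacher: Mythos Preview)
Your proof is correct and follows essentially the same route as the paper: compute the piecewise-constant Radon--Nikodym derivative, bound $Z(\lambda,a)\in[e^{-1/\lambda},1]$, and split into the cases $x\le a$ and $x>a$. The only cosmetic differences are that you state the result for every $a$ before specialising to $a^{*}$, and in the second case you multiply through and cancel whereas the paper writes the difference directly as $F(x)-G_{\lambda,a^{*}}^{*}(x)=\frac{1-e^{-1/\lambda}}{Z}\,F(a^{*})\bigl[1-F(x)\bigr]\ge 0$; these are algebraically equivalent.
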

\begin{proof}
Denote the normalizing constant
\[
  Z:=1-\bigl(1-e^{-1/\lambda}\bigr)F(a^*), 
  \qquad 0<e^{-1/\lambda}<1,\; 0\le F(a^*)\le1 .
\]
Then
\[
  e^{-1/\lambda}< Z\le 1
  \;\;\Longrightarrow\;\;
  0<\frac{e^{-1/\lambda}}{Z}\le 1,
  \qquad
  1\le\frac{1}{Z}<\infty,
\]
so the Radon–Nikodym derivative in~\eqref{eq:Qtilt} is \(<1\) on \((-\infty,a^*]\)
and \(>1\) on \((a^*,\infty)\). For any \(x\in\R\), 
\[
  G_{\lambda,a^*}^*(x)=
  \begin{cases}
    \displaystyle\frac{e^{-1/\lambda}}{Z}\,F(x), & x\le a^*,\\[8pt]
    \displaystyle\frac{e^{-1/\lambda}}{Z}\,F(a^*)
      +\frac{1}{Z}\bigl[F(x)-F(a^*)\bigr], & x>a^* ,
  \end{cases}
\]
because \( {\rm d}Q^*_{\lambda,a^*} = ({\rm d}Q^*_{\lambda,a^*}/{\rm d}P)\,{\rm d}P\).
\begin{enumerate}
\item If \(x\le a^*\), then \(G_{\lambda,a^*}^*(x)=\frac{e^{-1/\lambda}}{Z}F(x)\le F(x)\).
\item If \(x>a^*\), subtract the above expressions to obtain
      \[
        F(x)-G_{\lambda,a^*}^*(x)=\frac{1-e^{-1/\lambda}}{Z}\,F(a^*)\bigl[1-F(x)\bigr]\ge 0 .
      \]
\end{enumerate}

\smallskip
\noindent Hence \(F(x)\ge G_{\lambda,a^*}^*(x)\) for every \(x\in\R\); i.e.\ 
\(G^*_{\lambda,a^*}\) is first-order stochastically dominates \(F\),
establishing~\eqref{eq:FSD_Qstar}.
\end{proof}

\section{Potential applications}

\paragraph{Stress scenarios.}
Using the fixed–$\lambda$ tilt introduced above, we obtain a one-parameter family of distribution-wide stress measures; sampling or importance–weighting under the tilted law yields order-consistent scenarios whose severity is governed by $\lambda$ (equivalently, by the implied KL budget). One can use this as generator for adverse events across applications: (i) supervisory stress testing in banking/insurance \cite{BCBS2018,EIOPA2021}; (ii) robust portfolio selection and model-risk quantification \cite{GlassermanXu2014,BreuerCsiszar2016}; (iii) hedging and (re)insurance where Esscher-style exponential reweighting  \cite{Esscher1932, GerberShiu1994}; (iv) scenario mechanics grounded in importance sampling \cite{Glasserman2004}.

\paragraph{Reverse sensitivity.} Reverse sensitivity analysis  \cite{pesenti2019} stresses a \emph{scalar} risk functional (e.g., VaR or ES) and then selects, within a Kullback--Leibler (KL) neighbourhood of the baseline law. Instead, we impose a \emph{distribution-wide} stress by enforcing an order constraint on the \emph{entire} distribution via stochastic dominance type inequalities \eqref{eq:FSD_Qstar}. Given a target stress level \(p^\dagger\in(\tfrac12,1)\), we define the critical KL radius
\[
\varepsilon_{\mathrm{crit}}(p^\dagger)
:= \inf\Bigl\{\varepsilon>0:\ \max_{Q:\,D_{\mathrm{KL}}(Q\|P)\le \varepsilon}\ \min_{\gamma\in\Pi(P,Q)} \PP_{\gamma}(Y>X)\ \le\ p^\dagger\Bigr\},
\]
or, in the fixed–\(\lambda\) dual, the smallest \(\lambda\) such that \(\max_{a\in\R}\phi_\lambda(a)\le p^\dagger\). This formulation stresses families of tail events across all thresholds rather than a single tail-risk statistic, yielding distributional stress aligned with KL-ambiguity setup.

\printbibliography
\end{document}